\numberwithin{equation}{section}
\newtheorem{thm}{Theorem}[section]
\newtheorem{cor}[thm]{Corollary}
\newtheorem{lemma}[thm]{Lemma}
\theoremstyle{remark}
\newtheorem{rem}[thm]{Remark}
\theoremstyle{definition}
\newcommand{\applied}[2]{\langle #1,#2\rangle}
\DeclarePairedDelimiter\norm{\lVert}{\rVert}
\DeclarePairedDelimiter\abs{\lvert}{\rvert}
\renewcommand{\phi}{\varphi}
\newcommand{\eps}{\varepsilon}
\newcommand{\R}{\mathds{R}}
\newcommand{\C}{\mathds{C}}
\newcommand{\N}{\mathds{N}}
\newcommand{\Q}{\mathds{Q}}
\newcommand{\cM}{\mathscr{M}}
\newcommand{\cF}{\mathscr{F}}
\begin{document}

\title{A Generalization of L\'evy's Theorem on positive matrix semigroups}
\author{Moritz Gerlach}
\address{Moritz Gerlach, Universit\"at Potsdam, Institut f\"ur Mathematik, Karl--Liebknecht--Stra{\ss}e 24–25, 14476 Potsdam, Germany}
\email{gerlach@math.uni-potsdam.de}

\begin{abstract}
	We generalize a fundamental theorem on positive matrix semigroups stating that each component is either strictly positive for all times or identically zero (``L\'evy's Theorem'').
	Our proof of this fact that does not require the matrices to be continuous at time zero.
	We also provide a formulation of this theorem in the terminology of positive operator semigroups on sequence spaces.
\end{abstract}

\subjclass[2020]{Primary 60J35, 47D03; Secondary: 46A40, 47B65}
\keywords{transition matrix; matrix semigroup; operator semigroup; discrete space; sequence space}
\date{\today}
\dedicatory{To Caro}

\maketitle

\section{Introduction}

Let $I$ denote a countable set. A family of functions $p_{ij}\colon (0,\infty) \to [0,\infty)$, $i,j \in I$, 
is called a \emph{positive matrix semigroup (or transition semigroup)} over the index set $I$ if for every $i,j \in I$ the following two conditions are fulfilled:
	\begin{enumerate}[(a)]
		\item\label{item:transitioncontinuous} $p_{ij}$ is continuous on $(0,\infty)$ and
		\item $p_{ij}(t+s) = \sum_{k\in I} p_{ik}(t)p_{kj}(s)$ for all $t,s >0$,
	\end{enumerate}

The main result of the present article is the following theorem (see Theorem \ref{thm:oncepositivealwayspositive}):

\begin{thm}
\label{thm:main}
	Let $p_{ij}$ be a positive matrix semigroup over a countable index set $I$. Then for all  $i,j \in I$
	\begin{align}
	\label{eqn:statement}
	\text{either }p_{ij}(t)=0\text{ for all }t>0\text{ or }p_{ij}(t)>0\text{ for all }t>0.
	\end{align}
\end{thm}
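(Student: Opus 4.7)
Plan: Fix $i, j \in I$ and set $P_{ij} := \{t > 0 : p_{ij}(t) > 0\}$ (open in $(0, \infty)$ by continuity) and $N_{ij} := (0, \infty) \setminus P_{ij}$. The goal is to show $P_{ij}$ is either empty or all of $(0, \infty)$. I will first handle the diagonal case $i = j$, which is the core difficulty; the off-diagonal case then follows by standard sandwich bounds.

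For the diagonal case, assume $P_{ii} \neq \emptyset$. The inequality $p_{ii}(s+t) \geq p_{ii}(s) p_{ii}(t)$ makes $P_{ii}$ closed under addition. An open interval $(a, b) \subset P_{ii}$ therefore yields $(na, nb) \subset P_{ii}$ for every $n \geq 1$, and for $n \geq a/(b-a)$ consecutive dilates overlap, producing a tail $P_{ii} \supset (T_*, \infty)$ for some $T_* < \infty$. It remains to rule out $N_{ii} \neq \emptyset$; assume for contradiction $s_0 \in N_{ii}$.

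Two elementary consequences of Chapman--Kolmogorov do the heavy lifting. Rewriting $\sum_{k \in I} p_{ik}(\epsilon) p_{ki}(s_0 - \epsilon) = p_{ii}(s_0) = 0$ as a sum of nonnegative terms forces each summand, and in particular the diagonal one $p_{ii}(\epsilon) p_{ii}(s_0 - \epsilon)$, to vanish; this \emph{halving lemma} says $\epsilon \in N_{ii}$ or $s_0 - \epsilon \in N_{ii}$ for every $\epsilon \in (0, s_0)$. Consequently $(0, s_0)$ is covered by the two closed subsets $N_{ii} \cap (0, s_0)$ and $(s_0 - N_{ii}) \cap (0, s_0)$; Baire's theorem then furnishes an open interval $(a, b) \subset N_{ii}$. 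The second consequence, $p_{ii}(ns) \geq p_{ii}(s)^n$, gives $s \in N_{ii} \Rightarrow s/n \in N_{ii}$ for every $n \geq 1$; applied to $(a, b)$ this yields $(a/n, b/n) \subset N_{ii}$, and for $n \geq a/(b-a)$ these intervals overlap. Their union is $(0, b/n_0)$, whose closure gives an initial segment $(0, \alpha] \subset N_{ii}$ for some $\alpha > 0$.

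The main obstacle I anticipate is deriving the final contradiction from $(0, \alpha] \subset N_{ii}$ and $P_{ii} \neq \emptyset$. A doubling iteration---if $P_{ii} \cap (\alpha, 2\alpha) = \emptyset$ then $(0, 2\alpha] \subset N_{ii}$, and iterating must terminate because $N_{ii} \subset (0, T_*]$ is bounded---reduces to the case $P_{ii} \cap (\alpha, 2\alpha) \neq \emptyset$. For $t$ in this intersection,
\[
0 < p_{ii}(t) = \sum_k p_{ik}(t - \alpha) p_{ki}(\alpha),
\]
the $k = i$ summand vanishes, so some $k \neq i$ satisfies $p_{ik}(t - \alpha), p_{ki}(\alpha) > 0$. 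The halving lemma at $s_0 = \alpha$ combined with continuity of $p_{ki}$ at $\alpha$ forces $p_{ik}$ to vanish on some $(0, \delta_k)$, giving $t - \alpha \geq \delta_k$. Extracting a uniform lower bound $\delta_* > 0$ valid for all $t \in P_{ii} \cap (\alpha, 2\alpha)$ would then push the initial segment of $N_{ii}$ past $\alpha$, contradicting the doubling reduction. The uniformity is straightforward when the set $\{k : p_{ki}(\alpha) > 0\}$ is finite; the countably-infinite case is the delicate point, and I plan to handle it by exploiting convergence of the series $\sum_k p_{ik}(\alpha) p_{ki}(\alpha) = p_{ii}(2\alpha)$ together with the continuity relation $p_{ii}(\alpha + \eta) = \sum_k p_{ik}(\eta) p_{ki}(\alpha) \to 0$ as $\eta \to 0^+$ to control the cumulative contribution of infinitely many states. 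Finally, the off-diagonal statement follows from the diagonal one via $p_{ij}(t + s) \geq p_{ii}(s) p_{ij}(t)$ and $p_{ij}(t + s) \geq p_{ij}(t) p_{jj}(s)$ to extend positivity of $p_{ij}$ to large times, together with a backwards Chapman--Kolmogorov decomposition of $p_{ij}(t_0)$ to propagate positivity to small times.
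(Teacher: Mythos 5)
There is a genuine gap, in fact two, and they sit exactly where the real difficulty of the theorem lies. Your preparatory steps are fine (the halving lemma, the Baire covering of $(0,s_0)$ by $N_{ii}$ and $s_0-N_{ii}$, and the scaling $s\in N_{ii}\Rightarrow s/n\in N_{ii}$ do give an initial segment $(0,\alpha]\subseteq N_{ii}$), but the step you yourself flag as the ``main obstacle'' is not a delicate detail to be handled later: it \emph{is} the theorem. From $(0,\alpha]\subseteq N_{ii}$ you must rule out positivity just to the right of $\alpha$, and your plan for the infinitely-many-bridge-states case does not do this. The series $\sum_k p_{ik}(\alpha)p_{ki}(\alpha)$ equals $p_{ii}(2\alpha)$, which is not known to vanish, and the relation $p_{ii}(\alpha+\eta)=\sum_k p_{ik}(\eta)p_{ki}(\alpha)\to 0$ as $\eta\downarrow 0$ only gives smallness, never vanishing: nothing in your argument excludes a sequence $\eta_n\downarrow 0$ and pairwise distinct states $k_n$ with $p_{ik_n}(\eta_n)p_{k_n i}(\alpha)>0$ while the individual thresholds $\delta_{k_n}$ tend to $0$. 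The paper's proof of precisely this implication (Theorem \ref{thm:chung}) needs much heavier machinery: Dini's theorem to get \emph{uniform} convergence of the Chapman--Kolmogorov series on a whole compact time window, hence one finite set $F$ valid for all decomposition times simultaneously, and then a counting/pigeonhole argument over the sets $C_m=\{k: p_{ik}(ls)=0 \text{ for } l\le m\}$ that produces the contradiction $Nc\le 7N\eps$. That this step is genuinely subtle is underlined by the fact, discussed in the introduction, that Chung's own published proof of it was incomplete.

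The second gap is the reduction of the off-diagonal case to the diagonal case. Both diagonals can vanish identically while $p_{ij}$ does not: take $I=\{1,2,3\}$ and the constant idempotent semigroup $p_{kl}(t)=\alpha_k b_l$ with $\alpha=(1,0,1)$, $b=(0,1,1)$ (one checks $\sum_k b_k\alpha_k=1$, so Chapman--Kolmogorov holds); then $p_{11}\equiv p_{22}\equiv 0$ but $p_{12}\equiv 1$. In such a situation your sandwich bounds $p_{ij}(t+s)\ge p_{ii}(s)p_{ij}(t)$ and $p_{ij}(t+s)\ge p_{ij}(t)p_{jj}(s)$ are vacuous, the diagonal theorem gives no information, and a ``backwards'' Chapman--Kolmogorov decomposition merely reintroduces bridge states whose diagonals may again vanish identically. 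This is why the paper needs a separate substantial argument for the off-diagonal entries: the path decomposition avoiding $j$ in Lemma \ref{lem:induction}, Corollary \ref{cor:eventuallyzero}, and the argument in Theorem \ref{thm:oncepositivealwayspositive} showing that for every intermediate state $k$ one of $p_{ik}$, $p_{kj}$ is identically zero. Your Baire-plus-scaling route to an initial vanishing segment is a pleasant alternative to part of the paper's Corollary \ref{cor:eventuallyzero}, but note it only works on the diagonal (the covering of $(0,s_0)$ by two closed sets uses $i=j$), so it cannot substitute for these off-diagonal arguments either.
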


We also transfer this theorem to the theory of operator semigroups, where it becomes the following (see Corollary  \ref{cor:semigroups}):

\begin{thm}
\label{thm:semigroups}
Let $E$ be a sequence space (as defined in Section \ref{sec:semigroups}).
Let $(T_t)_{t \in (0,\infty)}$ be a positive one-parameter semigroup of sequentially order continuous operators on $E$ such that $t\mapsto (T_t e^i)_j$ is continuous on $(0,\infty)$ for all $i,j \in \N$.

Then for all $i,j \in\N$ either $(T_t e^i)_j>0$ for all $t>0$ or $(T_t e^i)_j=0$ for all $t>0$.
\end{thm}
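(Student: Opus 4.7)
The plan is to reduce Theorem~\ref{thm:semigroups} to Theorem~\ref{thm:main} by defining $p_{ij}(t) := (T_t e^i)_j$ for all $i,j \in \N$ and $t > 0$, and checking that the family $(p_{ij})_{i,j \in \N}$ is a positive matrix semigroup over $I = \N$ in the sense of conditions~(a) and~(b) of the introduction.

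Nonnegativity of each $p_{ij}(t)$ will be immediate from positivity of $T_t$ together with the fact that coordinate evaluation is a positive functional on a sequence space. Continuity of each $p_{ij}$ on $(0,\infty)$ is precisely the hypothesis on $t \mapsto (T_t e^i)_j$. The only nontrivial point is thus the Chapman--Kolmogorov identity $p_{ij}(t+s) = \sum_{k \in \N} p_{ik}(t)\, p_{kj}(s)$. To establish it, I would first expand $T_t e^i$ in the standard basis as $T_t e^i = \sum_k p_{ik}(t)\, e^k$, with convergence in the order-theoretic sense built into the definition of a sequence space (this should amount to every element being the order limit of its finite coordinate truncations). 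Applying $T_s$ on the left and invoking sequential order continuity of $T_s$, the sum passes inside, giving $T_{t+s} e^i = \sum_k p_{ik}(t)\, T_s e^k$ as an order limit. Taking the $j$-th coordinate, and using that $x \mapsto x_j$ is itself sequentially order continuous on a sequence space, yields the desired identity.

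Once $(p_{ij})$ is seen to be a positive matrix semigroup, Theorem~\ref{thm:main} immediately delivers the stated dichotomy. The main obstacle --- if one wishes to call it that --- is the twofold appeal to sequential order continuity: once for $T_s$ to commute with the basis expansion of $T_t e^i$, and once for coordinate evaluation to commute with the resulting order limit. Both should be standard properties guaranteed by the definition of sequence space in Section~\ref{sec:semigroups}, so the substantive content of the corollary has already been absorbed into Theorem~\ref{thm:main}, and what remains is essentially bookkeeping.
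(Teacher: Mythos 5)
Your proposal is correct and is essentially the paper's own argument: the paper likewise sets $p_{ij}(t)=(T_te^i)_j$, uses that $\sum_{k=1}^N(T_te^i)_ke^k$ order converges to $T_te^i$ together with sequential order continuity of $T_s$ (and the fact that order convergence is, by definition, coordinatewise) to obtain $(T_{t+s}e^i)_j=\sum_k(T_te^i)_k(T_se^k)_j$, and then invokes Theorem~\ref{thm:main}. No gaps; the bookkeeping you describe is exactly what the paper carries out.
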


Here, $e^i \coloneqq \big(\mathds{1}_{\{i\}}(j)\big)_{j\in\N}$ denotes the \emph{$i$-th unit vector}.
On sequence spaces with a complete and order continuous lattice norm, such as $c_0$ (space of null sequences)  or $\ell^p$ (space of $p$-summable sequences for $1\leq p < \infty$),
the result simplifies as follows (see Corollary \ref{cor:measurablesemigroups}):

\begin{cor}
Let $E$ denote a sequence space with a complete and order continuous lattice norm and
let $(T_t)_{t\in (0,\infty)}$  be a positive one-parameter semigroup such that $t\mapsto (T_te^i)_j$ is measurable $(0,\infty)$ for all $i,j \in \N$.
	Then, for every $i,j \in \N$, either $(T_te^i)_j>0$ for all $t>0$ or $(T_te^i)_j=0$ for all $t>0$.
\end{cor}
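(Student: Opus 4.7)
\medskip
\noindent\emph{Proof plan.} The plan is to deduce the corollary from Theorem~\ref{thm:semigroups} by verifying its two hypotheses: sequential order continuity of each $T_t$, and continuity of $t\mapsto (T_te^i)_j$.

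First I would establish sequential order continuity. Recall that a positive linear operator on a Banach lattice is automatically norm-bounded, so each $T_t$ is a bounded operator. If $x_n\downarrow 0$ in $E$, then order continuity of the norm gives $\norm{x_n}\to 0$, so $\norm{T_tx_n}\to 0$; since $(T_tx_n)$ is decreasing with nonnegative terms, its order infimum $y$ satisfies $0\le y\le T_tx_n$ and hence $\norm{y}\le\norm{T_tx_n}\to 0$, forcing $y=0$. Thus $T_tx_n\downarrow 0$, i.e.\ $T_t$ is sequentially order continuous.

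The main work is upgrading measurability of $p_{ij}(t):=(T_te^i)_j$ to continuity. The key preliminary is that the Chapman--Kolmogorov identity
\begin{equation*}
p_{ij}(t+s)=\sum_{k\in\N}p_{ik}(s)\,p_{kj}(t)
\end{equation*}
holds: indeed, order continuity of the norm implies that the truncations $\sum_{k=0}^N p_{ik}(s)e^k$ increase in order to $T_se^i$, and sequential order continuity of $T_t$ allows pushing $T_t$ through the supremum, after which one reads off the $j$-th component. Observe also that $E$ is separable, because the same truncation argument shows that finitely supported (rational) sequences are norm dense in $E$.

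The hard part is then: deduce from measurability of the $p_{ij}$ together with the above identity that each $p_{ij}$ is continuous on $(0,\infty)$. I would proceed by first showing that $t\mapsto T_te^i$ is strongly measurable (weak measurability via coordinate functionals $e^j{}^*$, plus Pettis, using separability of $E$), and that $t\mapsto \norm{T_te^i}$ is locally bounded on $(0,\infty)$ --- this uses the semigroup property $T_{t+s}e^i=T_tT_se^i$ to bootstrap local boundedness on some small interval (obtained from measurability via a Steinhaus-type argument) to all of $(0,\infty)$. With measurability and local boundedness in hand, the standard mollification
\begin{equation*}
\int\varphi(s)\,p_{ij}(t+s)\,\dx s \;=\; \sum_{k\in\N}p_{ik}(t)\int\varphi(s)\,p_{kj}(s)\,\dx s,
\end{equation*}
combined with the classical fact that measurable semigroups on separable Banach spaces are strongly continuous on $(0,\infty)$, yields continuity of each $p_{ij}$. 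Once continuity is established, Theorem~\ref{thm:semigroups} applies directly and the dichotomy follows. The technical crux throughout is the measurability-to-continuity step, which is where the order continuity of the norm (giving separability of $E$) is genuinely used.
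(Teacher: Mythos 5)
Your proposal is correct and takes essentially the same route as the paper: establish order continuity of each $T_t$ from positivity, norm completeness and order continuity of the norm, note separability of $E$, upgrade measurability to strong measurability (Pettis) and then invoke the classical theorem that measurable semigroups are strongly continuous on $(0,\infty)$ (the paper cites Hille--Phillips, Thm 10.2.3), after which the continuous-case result (Corollary \ref{cor:semigroups}) gives the dichotomy. The only small caveat is that Pettis needs weak measurability against all of $E'$ (or a separating countable family plus a Borel-isomorphism argument), which the paper obtains more directly by observing that every positive $\phi \in E'$ is order continuous, so that $t \mapsto \applied{\phi}{T_t x}$ is a countable supremum of measurable functions; your Steinhaus and mollification steps are then already contained in the cited classical theorem.
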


A version for spaces with uncountably many atoms, such as $\ell^p(\R)$ for $1\leq p < \infty$, can be found as Corollary \ref{cor:uncountable}.

\medskip

Under additional assumptions, Theorem \ref{thm:main} is well-known in the literature.
A first partial proof was given by L\'evy in \cite[Thm II.8.1]{levy1951}
for positive matrix semigroups which are \emph{Markovian}, i.e.\
	\begin{enumerate}[(a)]
	\item[(c)]\label{it:markov} $\sum_{j\in I} p_{ij}(t) = 1$ for all  $t>0$ and $i\in I$,
	\end{enumerate}
and the additional assumption that all $p_{jj}$ have finite derivatives at time $0$.  This is why Theorem \ref{thm:main} is sometimes referred to as ``Levy's Theorem''.
After Chung had pointed out some inaccuracies to him, L\'evy amended his arguments for the general Markovian case in \cite[Sec 7]{levy1958}.

According to the notes in \cite{chung1960}, an alternative proof for Markovian matrix semigroups was found independently by D.\ G.\ Austin, simplified by Chung using probabilistic methods and published as \cite[Thm II.5.2]{chung1960}.
Another algebraic proof was found by D.\ Ornstein and published as \cite[Thm II.1.5]{chung1960}. Both proofs make use of the fact that for a Markovian matrix semigroup
for all $i,j$ the limits $\lim_{t\downarrow 0}p_{ij}(t)$ exist, cf.\ \cite[Thm II.1.3]{chung1960}.

In \cite[Thm 10.1]{chung1963}, Chung proved Statement \eqref{eqn:statement} in the most general situation so far, where the Markov condition (c) is replaced by 
continuity at $t=0$, i.e.\ 
	\begin{enumerate}[(a)]
	\item[(d)]\label{it:standard} $\lim_{t\to 0} p_{ij}(t) = \begin{cases} 0 & i\neq j \\ 1 & i=j \end{cases}$ for all $i,j \in I$.
	\end{enumerate}
Matrix semigroups satisfying property (d) are also called \emph{standard} in the literature.
Unfortunately, the proof given in \cite[Thm 10.1]{chung1963} is apparently not complete: it is unclear how to conclude $p(2t_0)\to 0$ as $N\to 0$ from the uniform convergence of (10.2) at the top of page 76, as several objects
depend on $N$. This has recently led to some uncertainty regarding correctness of the theorem within the operator semigroup community.
However, it is not difficult to understand the author's intention by taking his previous publications as \cite[Thm II.1.5]{chung1967} into account.

\medskip

In the theory of operator semigroups a version of Theorem \ref{thm:semigroups} for positive analytic $C_0$-semigroups on general (not necessarily discrete) 
Banach lattices can be found in \cite[Thm C-III 3.2(b)]{nagel1986}. 
For self-adjoint $C_0$-semigroups on $L^2$-spaces this result is due to Simon \cite[Thm~1]{Simon1973}.
For $C_0$-semigroups on $L^p$-spaces it was shown by Kishimoto and Robinson \cite[Lem~8]{KishimotoRobinson1981}
and it was generalized to the Banach lattice setting by Majewski and Robinson \cite[Prop~2]{MajewskiRobinson1983}.
Jochen Gl\"uck pointed out to the author that, similar to the main result of the present article, the assumption of strong continuity at time zero can also be dropped in the analytic case.
This result is provided as Theorem \ref{thm:analytic} in the appendix.

In the 21th century, results similar to Theorem \ref{thm:semigroups} have also been proven in the context of infinite graphs, cf.\ \cite[Thm 7.3]{haeseler2012} and \cite[Thm 1.26]{keller2021}, which is a special
case of both, the discrete and the analytic setting.

\medskip 
Summarizing, the present article has three goals: First, to give a complete proof of Chung's result.
Second, to generalize it by dropping both assumptions (c) and (d).
And third, to transfer the theorem to the theory of operator semigroups, where it is apparently little known.

\section{Positive Matrix Semigroups}

The proof of the following theorem is a combination and adaption of \cite[Thm II.1.5]{chung1967} and \cite[Thm 10.1]{chung1963}.
\begin{thm}
\label{thm:chung}
	Let $p_{ij}$ be a positive matrix semigroup over a countable index set $I$. 
	Let $i,j \in I$ and $t_0>0$ such that $p_{ij}(t)=0$ for all $0< t \leq t_0$.
	Then $p_{ij}(t)=0$ for all $t > 0$.
\end{thm}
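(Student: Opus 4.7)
The plan is to argue by contradiction. Suppose $p_{ij}(T) > 0$ for some $T > 0$; by assumption $T > t_0$. Since $p_{ij}$ is continuous, its positive set is open, so enlarging $t_0$ to
\[
    \tau := \sup\{s > 0 : p_{ij} \equiv 0 \text{ on } (0, s]\},
\]
which satisfies $\tau \geq t_0$ (with the case $\tau = \infty$ immediately giving the conclusion), one may assume that $p_{ij}$ vanishes on $(0, t_0]$ but takes positive values arbitrarily close to $t_0$ from above. In particular there exists $t^{*} \in (t_0, 2t_0)$ with $p_{ij}(t^{*}) > 0$.

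The Chapman--Kolmogorov identity
\[
    0 < p_{ij}(t^{*}) = \sum_{k \in I} p_{ik}(s)\, p_{kj}(t^{*} - s),
    \qquad s \in (0, t^{*}),
\]
admits the nonempty range $s \in (t^{*} - t_0, t_0)$ on which both $s$ and $t^{*} - s$ lie in $(0, t_0]$, so that the terms with $k \in \{i, j\}$ vanish identically. For every such $s$ there is then some $k \in I \setminus \{i, j\}$ giving a positive product. The countable family of open sets
\[
    U_k := \bigl\{ s \in (t^{*} - t_0, t_0) : p_{ik}(s) > 0 \text{ and } p_{kj}(t^{*} - s) > 0 \bigr\},
    \qquad k \in I \setminus \{i, j\},
\]
thus covers the interval $(t^{*} - t_0, t_0)$, and by the Baire category theorem one of them, say $U_{k^{*}}$, contains an open subinterval $(a, b)$. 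Applying Chapman--Kolmogorov in reverse then yields $p_{ij}(s + r) \geq p_{ik^{*}}(s)\, p_{k^{*}j}(r) > 0$ for $s \in (a, b)$ and $r \in (t^{*} - b, t^{*} - a)$, so $p_{ij}$ is positive on the interval $(t^{*} - (b - a),\, t^{*} + (b - a))$.

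The main obstacle---precisely the step where Chung's original argument in \cite{chung1963} contains the gap identified in the introduction---is obtaining the quantitative estimate $b - a > t^{*} - t_0$, which would force the positivity interval to extend below $t_0$ and yield the desired contradiction. The plan is to combine the Baire step with the observation that the constraint $p_{ik^{*}}(s)\, p_{k^{*}j}(r) = 0$ for $s + r \leq t_0$ propagates the vanishing hypothesis to the pair $(k^{*}, j)$ itself on a nontrivial initial interval, so that $(k^{*}, j)$ is a ``smaller'' instance of the same phenomenon. Iterating in the spirit of \cite{chung1967} produces an infinite sequence of index pairs with geometrically shrinking vanishing thresholds; tracking the continuity radii $\eta$ appearing at each stage against the countability of $I$ (via a pigeonholing or minimal-counterexample argument) is what delivers the final contradiction. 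This bookkeeping---precisely the delicate point that Chung's presentation left underspecified---is the hardest part of the proof.
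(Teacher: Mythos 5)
Your argument stops exactly where the real difficulty begins, and you say so yourself: everything up to the Baire/covering step is sound but soft. Since each $U_k$ is open (by continuity of $p_{ik}$ and $p_{kj}$), you do not even need Baire to find a subinterval $(a,b)\subseteq U_{k^*}$ -- but this gives you no lower bound whatsoever on $b-a$, and without the estimate $b-a \ge t^*-t_0$ the positivity interval $(t^*-(b-a),\,t^*+(b-a))$ need not reach below $t_0$, so no contradiction follows. The remedy you propose (propagating the vanishing hypothesis to the pair $(k^*,j)$, iterating to get ``geometrically shrinking vanishing thresholds'', and finishing by ``pigeonholing or a minimal-counterexample argument'' against the countability of $I$) is only a plan, not a proof: the iteration produces new index pairs and new intervals whose lengths you again cannot control, $I$ is infinite so there is no minimal counterexample to appeal to, and no mechanism is given that turns countability plus shrinking intervals into a contradiction. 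So the proposal has a genuine gap -- the quantitative heart of the theorem is missing.

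For comparison, the paper obtains the needed uniformity not from lengths of intervals in the time variable but from Dini's theorem: the series $p_{ij}(2t_0)=\sum_k p_{ik}(t)p_{kj}(2t_0-t)$ converges uniformly for $t$ in a compact subinterval of $(0,2t_0)$, which yields a single \emph{finite} index set $F$ (with $\#F=N$) such that the tail over $I\setminus F$ is $<\eps$ simultaneously for all such $t$. The contradiction is then extracted by a discrete double-counting argument on a grid of step $s=t_0/(4N)$: one introduces the sets $C_m=\{k: p_{ik}(ls)=0 \text{ for } 1\le l\le m\}$ and the differences $D_m=C_m\setminus C_{m+1}$, proves the monotonicity inequality $u(m,n+1)-v(m,n+1)\le u(m,n)$ and the identity $c=u(4N-N_0,N_1)$, and uses that the pairwise disjoint sets $D_m$ can meet $F$ for at most $N$ values of $m$; summing over $N$ good values of $m$ gives $Nc\le 7N\eps<Nc$. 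If you want to complete your approach, you would need to supply a uniform quantitative input of exactly this kind; the interval-covering picture alone cannot produce it.
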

\begin{proof}
	Aiming for a contradiction, first assume that $c\coloneqq p_{ij}(2t_0)>0$.
	Fix $0< \eps < c/7$ and define $\delta\coloneqq 1/16$. By Dini's theorem, the series
	\[ c = p_{ij}(2t_0) = \sum_{k\in I} p_{ik}(t) p_{kj}(2t_0-t)\]
	converges uniformly on $[\delta 2t_0,(1-\delta)2t_0]$. Hence, there exists a finite set $F\subseteq I$ such that 
	\[ \sum_{k\in I\setminus F} p_{ik}(t) p_{kj}(2t_0-t) < \eps \text{ for all }t\in[\delta 2t_0,(1-\delta)2t_0].\]
	Let $N\coloneqq \#F$ denote the number of elements in $F$.  
	By enlarging the set $F$ if necessary we may assume that $N$ is an even number.
	We define the step size $s\coloneqq \frac{t_0}{4N}$ and the abbreviations $N_0 \coloneqq 8\delta N$ and $N_1 \coloneqq 8(1-\delta)N$.
	Note that $N_0, N_1\in \N$, $N_0s=\delta 2t_0$, $N_1s=(1-\delta) 2t_0$ and $(N_0+N_1)s=2t_0$.

	Next, for all $m\in\N$ we define the sets $C_m \coloneqq \{ k \in I : p_{ik} (ls) = 0 \text{ for all }1\leq l \leq m \}$.
	Since $C_{m+1}\subseteq C_m$ for all $m\in\N$, the sets $D_m \coloneqq C_{m}\setminus C_{m+1}$ are pairwise disjoint.
	While $j\in C_m$ for all $1\leq m\leq 4N$, other sets $C_m$ are possibly empty.  For $m,n\in \N$, $n<8N$, we define
	\[ u(m,n) \coloneqq \sum_{k\in C_m} p_{ik}(ns)p_{kj}(8Ns-ns)
	\text{ and } v(m,n) \coloneqq \sum_{k\in D_m} p_{ik}(ns)p_{kj}(8Ns-ns).\]
	By definition of $C_m$, $u(m+1,n)\leq u(m,n)$ for all $m,n$ and $u(m,n)=0$ whenever $n\leq m$.
	Moreover, $v(m,n)<\eps$ whenever $D_m$ is disjoint from $F$ and $n\in \{ N_0,\dots, N_1\}$.
	
	If $M,m \in \N$ with $m<M$, $k\in C_M$ and $l\in I$ are such that $p_{lk}(ms)>0$, then $l \in C_{M-m}$. In fact, otherwise
	there is $M'\in \N$ with $m<M'\leq M$ such that $p_{il}((M'-m)s)>0$; this would imply that 
	\[ p_{ik}(M's)\geq p_{il}\big((M'-m)s\big)p_{lk}(ms) >0\]
	and thus $k\not \in C_M$.
	From this, we now conclude the following two assertions:
	First, 
	\begin{align*}
	u(m,n+1)-v(m,n+1)&= \sum_{k\in C_{m+1}} \biggl( \sum_{l\in I} p_{il}(ns)p_{lk}(s)\biggr) p_{kj}(8Ns-ns-s) \\
	&= \sum_{k\in C_{m+1}} \biggl( \sum_{l\in C_m} p_{il}(ns)p_{lk}(s)\biggr) p_{kj}(8Ns-ns-s) \\
	&\leq \sum_{l \in C_m} p_{il}(ns) \sum_{k\in I} p_{lk}(s) p_{kj}(8Ns-ns-s) = u(m,n)
	\end{align*}
	for all $m,n \in\N$, $n+1<8N$; and second, as $j\in C_{4N}$,
	\begin{align*}
		c = p_{ij}(2t_0) = \sum_{l\in I} p_{il}(N_1 s)p_{lj}(N_0s)
		= \sum_{l\in C_{4N-N_0}} p_{il}(N_1 s)p_{lj}(N_0s)= u(4N-N_0,N_1).
	\end{align*}
	Combining both observations, we obtain for all $N_0 \leq m \leq 4N-N_0$
	\begin{align}\label{eqn:candvmn} c=u(4N-N_0,N_1) \leq u(m,N_1) = u(m,N_1)-u(m,N_0) \leq \sum_{n=N_0+1}^{N_1} v(m,n). \end{align}

	Recall that at most $N$ of the disjoint sets $D_m$ can have a non-empty intersection with $F$.
	As the set $\{N_0, \dots, 4N-N_0\}$ has $4N-2N_0+1 \geq (4-16\delta)N = 3 N$ elements,
	we find $\cM\subseteq \{ N_0, \dots, 4N-N_0\}$ such that $\#\cM=N$ and $D_m \subseteq I\setminus F$ for all $m\in \cM$.
	Summing over $m \in \cM$, we obtain from \eqref{eqn:candvmn}
	\begin{align*}
	 N c &\leq \sum_{m\in \cM}\sum_{n=N_0+1}^{N_1} v(m,n) = \sum_{n=N_0+1}^{N_1} \sum_{m\in \cM} v(m,n)\\
	 &\leq \sum_{n=N_0+1}^{N_1} \sum_{k\in I\setminus F} p_{ik}(ns)p_{kj}(8Ns-ns) \\
	 &< \sum_{n=N_0+1}^{N_1} \eps = (N_1-N_0)\eps = (8N-16\delta N) \eps = 7N \eps  < Nc,
	\end{align*}
	a contradiction. This proves that $p_{ij}(2t_0)=0$.

	Applying this argument to every smaller value of $t_0$ shows that $p_{ij}(2t)=0$ for every $0<t\leq t_0$.
	Finally, we obtain by induction that $p_{ij}(t)=0$ for all $t>0$.
\end{proof}

\begin{cor}
	\label{cor:eventuallyzero}
	Let $p_{ij}$ be a positive matrix semigroup over a countable index set $I$. 
	Let $i,j \in I$ and $t_0>0$ such that $p_{ij}(t_0)=0$. Then $p_{ij}(t)=0$ for all $t\geq t_0$
\end{cor}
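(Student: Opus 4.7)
The plan is to reduce the corollary to Theorem~\ref{thm:chung} by leveraging the continuity of the semigroup to convert the single-point vanishing $p_{ij}(t_0)=0$ into interval vanishing of auxiliary functions $p_{ik}$ near zero.

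For $t>t_0$ I will use the Chapman--Kolmogorov identity
\[ p_{ij}(t) = \sum_{k\in I} p_{ik}(t-t_0)\, p_{kj}(t_0), \]
so it suffices to show that $p_{ik}(s)=0$ for every $s>0$ whenever $p_{kj}(t_0)>0$. Fix such a $k$. Since $p_{kj}$ is continuous on $(0,\infty)$ and strictly positive at $t_0$, there exists $\eta\in(0,t_0)$ with $p_{kj}(t_0-s)>0$ for all $s\in(0,\eta]$. On the other hand,
\[ 0 = p_{ij}(t_0) = \sum_{l\in I} p_{il}(s)\, p_{lj}(t_0-s) \qquad (0<s<t_0), \]
and since every summand is nonnegative all of them vanish; specializing to $l=k$ gives $p_{ik}(s)\, p_{kj}(t_0-s)=0$, hence $p_{ik}(s)=0$ for all $s\in(0,\eta]$. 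Theorem~\ref{thm:chung}, applied to the positive matrix semigroup function $p_{ik}$, then yields $p_{ik}(s)=0$ for all $s>0$.

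Substituting back into the first displayed identity shows $p_{ij}(t)=0$ for all $t>t_0$, and the hypothesis covers $t=t_0$. (If no $k$ with $p_{kj}(t_0)>0$ exists, the sum is trivially zero.) The one conceptual move is the use of continuity of $p_{kj}$ at $t_0$ to promote a single zero of $p_{ij}$ into a whole neighbourhood of zeros for each relevant $p_{ik}$; beyond that the result is an immediate application of the previously established theorem, so I do not anticipate any substantive obstacle.
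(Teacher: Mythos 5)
Your proof is correct, and it is a genuinely cleaner variant of the paper's argument. The paper decomposes in the opposite order, writing $p_{ij}(t_0+t)=\sum_k p_{ik}(t_0)p_{kj}(t)$, and therefore has to distinguish, for the \emph{left} factor, between indices $k$ for which $p_{ik}$ is strictly positive on some interval $(t_0-\delta,t_0)$ and those for which it is not: for the former it deduces $p_{kj}(s)=0$ for small $s$ and applies Theorem~\ref{thm:chung} to $p_{kj}$, for the latter it uses continuity to get $p_{ik}(t_0)=0$; it also first reduces to the infimum time $t_0'=\inf\{t>t_1: p_{ij}(t)=0\}$. You instead write $p_{ij}(t)=\sum_k p_{ik}(t-t_0)p_{kj}(t_0)$ and condition on the \emph{right} factor at the single time $t_0$: if $p_{kj}(t_0)>0$, continuity at that one point gives $p_{kj}(t_0-s)>0$ for $s\in(0,\eta]$, so the vanishing of $p_{ij}(t_0)$ forces $p_{ik}(s)=0$ on $(0,\eta]$, and Theorem~\ref{thm:chung} applied to $p_{ik}$ kills the whole summand. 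This mirrored decomposition makes the $I_0/I_1$ dichotomy, the infimum reduction, and the WLOG step of the paper unnecessary, while using exactly the same two ingredients (continuity plus Theorem~\ref{thm:chung}); the paper's version buys nothing extra here, so your shorter route is a perfectly valid, and arguably preferable, proof of Corollary~\ref{cor:eventuallyzero}.
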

\begin{proof}
	If $p_{ij}(t)=0$ for all $0<t<t_0$, then the assertion follows from Theorem \ref{thm:chung}. 
	Otherwise, there exists $0<t_1< t_0$ such that $p_{ij}(t_1)>0$. 
	Let $t'_0 \coloneqq  \inf \{ t>t_1 : p_{ij}(t)=0\}$. Since $t'_0 \leq t_0$ and $p_{ij}(t'_0)=0$ due to the continuity of $p_{ij}$, 
	we may assume without loss of generality that $t_0$ equals $t'_0$.  Now we consider
	\[ I_1 \coloneqq \{ k\in I : \text{ there exists }0<\delta<t_0 \text{ such that }p_{ik}(t)>0\text{ for all }t_0-\delta<t<t_0\} \]
	and define $I_0 \coloneqq I \setminus I_1$. Note that for every $k\in I_0$ it follows from the continuity of $p_{ik}$ that $p_{ik}(t_0)=0$.
	Let $k\in I_1$ and let $0<\delta<t_0$ such that $p_{ik}(t)>0$ for all $t_0-\delta<t<t_0$. For every $0<s<\delta$ it 
	follows from 
	\[ 0 = p_{ij}(t_0) \geq p_{ik}(t_0-s)p_{kj}(s)\]
	that $p_{kj}(s)=0$. Consequently, by Theorem \ref{thm:chung}, $p_{kj}(t)=0$ for all $t>0$.
	Combining $p_{ik}(t_0)=0$ for $k\in I_0$ and $p_{kj}(t)=0$ for all $t>0$ and $k\in I_1$, we obtain that
	\[ p_{ij}(t_0+t) = \sum_{k\in I_0} p_{ik}(t_0)p_{kj}(t)+\sum_{k\in I_1} p_{ik}(t_0)p_{kj}(t) = 0 \]
	for all $t>0$. This completes the proof.
\end{proof}

In order to obtain Theorem \ref{thm:main}, it remains to show that $p_{ij}(t_1)>0$ implies that $p_{ij}(t)>0$ for all $t\geq t_1$. If the matrix semigroup is standard, this follows easily from
$p_{ij}(t_1+s) \geq p_{ij}(t_1)\big(p_{jj}(s/n)\big)^n$ for all $s>0$ and $n\in\N$;
in case of Markovian matrices the proof is more complex but still based on the fact that  $\lim_{t\to 0} p_{jj}(t)$ always exists (but might be zero), cf.\ \cite[Thm II.1.4]{chung1967}.
In the general setting of this article this assertion is surprisingly sophisticated to prove.
This is the aim of what follows.

\begin{lemma}
\label{lem:induction}
	Let $p_{ij}$ be a positive matrix semigroup over a countable index set $I$ and let $j \in I$ and assume that $p_{jj}(t)=0$ for all $t>0$.
	\begin{enumerate}[(a)]
	\item Let $t>0$, $i,k \in I$ and $(\alpha_n)\subseteq (0,\infty)$. Then for every $N\in\N$  one has
	\begin{align}
	\label{eqn:hyp1}
		&p_{ik}\biggl(t+ \sum_{n=1}^N \alpha_n\biggr) = \sum_{n=1}^N p_{ij}\biggl( t + \sum_{m=1}^{n-1} \alpha_m \biggr)p_{jk}\biggl(\sum_{m=n}^N \alpha_m\biggr)\\
		&\quad+ \sum_{k_1,\dots,k_N \neq j} p_{ik_1}(t)p_{k_1k_2}(\alpha_1) \dots p_{k_{N-1}k_N}(\alpha_{N-1})p_{k_Nk}(\alpha_N). \nonumber
	\end{align}
	\item Let $i \in I$ and $t_1>0$ such that  $p_{ij}(t_1)>0$. 
	Then $p_{jk}(t)=0$ for all $k\in I$ and $t>0$.
	\end{enumerate}
\end{lemma}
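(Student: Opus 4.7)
The plan is to prove (a) by induction on $N$, then to use (a) in (b) by forcing an unbounded lower bound to race a fixed finite upper bound.

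For (a), the base case $N=1$ is the Chapman--Kolmogorov identity
\[ p_{ik}(t+\alpha_1) = \sum_{k_1\in I} p_{ik_1}(t)\,p_{k_1k}(\alpha_1), \]
split into the $k_1=j$ contribution (the unique $n=1$ summand of the first sum) and the $k_1\neq j$ contribution (the second sum). For the inductive step $N \to N+1$ I split off the last time increment,
\[ p_{ik}\!\left(t+\sum_{n=1}^{N+1}\alpha_n\right) = \sum_{l\in I} p_{il}\!\left(t+\sum_{n=1}^{N}\alpha_n\right) p_{lk}(\alpha_{N+1}), \]
and substitute the inductive hypothesis into each factor $p_{il}(t+\sum_{n=1}^N\alpha_n)$. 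After summing over $l\in I$, the $n$-th summand of the first sum (for $n\leq N$) absorbs $\alpha_{N+1}$ via $\sum_l p_{jl}(\cdot)p_{lk}(\alpha_{N+1})=p_{jk}(\cdot+\alpha_{N+1})$, while the ``$l\neq j$'' part of the inductive second sum (relabeling $l=k_{N+1}$) becomes the second sum of (a) for $N+1$. The leftover ``$l=j$'' part reconstitutes precisely the new top-index summand $p_{ij}(t+\sum_{n=1}^N\alpha_n)\,p_{jk}(\alpha_{N+1})$, because feeding the inductive hypothesis back into itself with $k$ replaced by $j$ and using $p_{jj}\equiv 0$ yields
\[ p_{ij}\!\left(t+\sum_{n=1}^N\alpha_n\right) = \sum_{k_1,\dots,k_N\neq j} p_{ik_1}(t)p_{k_1k_2}(\alpha_1)\cdots p_{k_Nj}(\alpha_N). \]

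For (b), I argue by contradiction. Suppose there exist $k\in I$ and $s_0>0$ with $p_{jk}(s_0)>0$. The contrapositive of Corollary~\ref{cor:eventuallyzero} yields $p_{jk}(s)>0$ for all $0<s\leq s_0$ and $p_{ij}(s)>0$ for all $0<s\leq t_1$. Set $\beta\coloneqq\min(t_1/2,s_0)$ and, for $M\in\N$, apply part (a) with $t=t_1/2$, $N=M$, and $\alpha_n=\beta/M$. Discarding the nonnegative second sum on the right,
\[ p_{ik}\!\left(\tfrac{t_1}{2}+\beta\right) \geq \sum_{n=1}^M p_{ij}\!\left(\tfrac{t_1}{2}+(n-1)\tfrac{\beta}{M}\right) p_{jk}\!\left((M-n+1)\tfrac{\beta}{M}\right). \]
For $1\leq n\leq\lfloor M/2\rfloor$ the argument of $p_{ij}$ lies in the compact interval $[t_1/2,\,t_1/2+\beta/2]\subset(0,t_1]$ and the argument of $p_{jk}$ lies in $[\beta/2,\beta]\subset(0,s_0]$; continuity and strict positivity on these compacta furnish a uniform lower bound $\delta>0$, independent of $M$, for each such summand. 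The right-hand side therefore dominates $\lfloor M/2\rfloor\cdot\delta$, which tends to infinity with $M$, contradicting the finiteness of the left-hand side. Hence no such $k,s_0$ exist, and (together with the hypothesis $p_{jj}\equiv 0$) this proves (b).

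The main obstacle I expect is the bookkeeping in the inductive step of (a): one must correctly match the ``$l=j$'' remainder to the new top-index summand, which requires feeding the inductive hypothesis back into itself with $k=j$ and invoking $p_{jj}\equiv 0$, while also tracking how $\alpha_{N+1}$ gets absorbed into the $p_{jk}$ factors. Once (a) is in hand, (b) becomes comparatively mechanical, since the identity was tailored so that many equal small steps produce a linear-in-$M$ lower bound against a constant left-hand side.
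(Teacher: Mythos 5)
Your proof is correct and takes essentially the same approach as the paper: induction with the self-substitution at $k=j$ and $p_{jj}\equiv 0$ for part (a), and in part (b) the linear-in-$N$ lower bound produced by (a) pitted against a fixed finite value. The only (harmless) variation is in (b): you get uniform positivity via the contrapositive of Corollary \ref{cor:eventuallyzero} plus compactness on $[t_1/2,\,t_1/2+\beta/2]$ and $[\beta/2,\beta]$, whereas the paper needs no appeal to that corollary, using only continuity of $p_{ij}$ and $p_{jk}$ just to the right of $t_1$ and $t_2$ and appending $t_2$ as the final increment.
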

\begin{proof}
	(a) We prove the statement by induction over $N$.  For $N=1$ one has
	\[ p_{ik}(t+\alpha_1) = \sum_{k_1 \in I} p_{ik_1}(t)p_{k_1k}(\alpha_1) = p_{ij}(t)p_{jk}(\alpha_1)+ \sum_{k_1\neq j}p_{ik_1}(t)p_{k_1k}(\alpha_1), \]
	which corresponds to \eqref{eqn:hyp1}. 
	Now assume that \eqref{eqn:hyp1} holds for a given $N\in\N$. Using that $p_{jj}$ is identically zero, this implies for $k=j$ that 
	\begin{align}
		\label{eqn:hyp2}
		\sum_{k_1,\dots,k_N \neq j} p_{ik_1}(t)p_{k_1k_2}(\alpha_1) \dots p_{k_{N-1}k_N}(\alpha_{N-1})p_{k_Nj}(\alpha_N) 
		&= p_{ij}\biggl( t + \sum_{n=1}^N \alpha_n \biggr).
	\end{align}
	Now it follows that
	\begin{align*}
		&p_{ik}\biggl(t+ \sum_{n=1}^{N+1}\alpha_n \biggr) = \sum_{k_{N+1}\in I}  p_{ik_{N+1}}\biggl( t + \sum_{n=1}^N \alpha_n \biggr)p_{k_{N+1}k}(\alpha_{N+1})\\
		&= \sum_{k_{N+1}\in I} \biggl( \sum_{n=1}^N p_{ij}\biggl( t+ \sum_{m=1}^{n-1}\alpha_m\biggr)p_{jk_{N+1}}\biggl(\sum_{m=n}^N \alpha_m\biggr)\\
		&\quad + \sum_{k_1,\dots,k_N \neq j} p_{ik_1}(t)p_{k_1k_2}(\alpha_1) \dots p_{k_{N-1}k_N}(\alpha_{N-1})p_{k_Nk_{N+1}}(\alpha_N) \biggr) p_{k_{N+1}k}(\alpha_{N+1})\\
		&= \sum_{n=1}^N p_{ij}\biggl( t + \sum_{m=1}^{n-1}\alpha_m\biggr)p_{jk}\biggl(\sum_{m=n}^{N+1} \alpha_m\biggr) 
		+p_{ij}\biggl(t + \sum_{m=1}^N\alpha_m\biggr)p_{jk}(\alpha_{N+1})\\
		&\quad + \sum_{k_1,\dots,k_{N+1} \neq j} p_{ik_1}(t)p_{k_1k_2}(\alpha_1) \dots p_{k_Nk_{N+1}}(\alpha_N)p_{k_{N+1}k}(\alpha_{N+1}),
	\end{align*}
	where the second equation follows from induction hypothesis \eqref{eqn:hyp1} for $k=k_{N+1}$ and the third from \eqref{eqn:hyp2} and from $p_{jj}=0$.
	This shows that \eqref{eqn:hyp1} holds for $N+1$. 

(b)	Aiming for a contradiction, assume that $p_{jk}(t_2)>0$ for some $t_2>0$ and $k\in I$. By continuity, there exists $\eps>0$ and $\delta>0$ such that 
	$p_{ij}(t_1+t)\geq \eps$ and $p_{jk}(t_2+t)\geq \eps$ for all $t \in [0,\delta]$. Choose a sequence $(\alpha_n) \subseteq (0,\infty)$ such that $\sum_{n\in\N} \alpha_n < \delta$.
	For every $N\in \N$, it now follows from Part (a) that
		\begin{align*}
	 	\sup_{t \in [0,\delta]} p_{ik}(t_1 + t + t_2) &\geq p_{ik}\biggl(t_1 + \sum_{n=1}^N \alpha_n + t_2\biggr)  \\
		&\geq  \sum_{n=1}^{N+1} p_{ij}\biggl( t_1 + \sum_{m=1}^{n-1} \alpha_m \biggr)p_{jk}\biggl(\sum_{m=n}^N \alpha_m + t_2\biggr)\\
		&\geq \sum_{n=1}^{N+1} \eps^2 = (N+1)\eps^2.
	\end{align*}
	This leads to a contradiction as $N \to \infty$. Thus, $p_{jk}(t)=0$ for all $t>0$. \qedhere
\end{proof}

The assumption $p_{ij}(t_1)>0$ for some $t_1>0$ in Part (b) of Lemma \ref{lem:induction} may seem artificial at first glance.
However, the example \[p_{ij}=\begin{cases}1 & j=k \\ 0 & j\neq k\end{cases} \] of a constant matrix semigroup shows that this assumption is indeed necessary.
The next theorem contains the article's main result.

\begin{thm}
\label{thm:oncepositivealwayspositive}
	Let $p_{ij}$ be a positive matrix semigroup over a countable index set $I$. 
	Let $i,j \in I$ and $t_0>0$ such that $p_{ij}(t_0)=0$. Then $p_{ij}(t)=0$ for all $t>0$.
\end{thm}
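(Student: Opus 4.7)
By Corollary \ref{cor:eventuallyzero} we already have $p_{ij}(t) = 0$ for all $t \geq t_0$, so the content of the theorem reduces to ruling out the existence of some $t_1 \in (0, t_0)$ with $p_{ij}(t_1) > 0$. I will argue by contradiction from such a $t_1$.

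The first step I would take is to prove a diagonal dichotomy: for every $m \in I$, either $p_{mm}$ vanishes identically on $(0, \infty)$ or $p_{mm}(t) > 0$ for every $t > 0$. Writing $A \coloneqq \{t > 0 : p_{mm}(t) > 0\}$, this set is open by continuity and closed under addition by $p_{mm}(s + t) \geq p_{mm}(s) p_{mm}(t)$; if $A$ is nonempty, the contrapositive of Theorem \ref{thm:chung} ensures that $A$ accumulates at $0$, since otherwise $p_{mm}$ would vanish on some initial interval and hence everywhere. Given $t > 0$, I would pick $s \in A$ with $s \leq t$, $\delta > 0$ with $(s - \delta, s + \delta) \subseteq A$, and $s' \in A$ with $s' < \delta$; translating the neighborhood of $s$ by $n s'$ with $n = \lfloor (t - s)/s' \rfloor$ and using additivity places $t$ into $A$.

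Armed with the dichotomy, I would fix any $s \in (0, t_1)$ and use the semigroup law
$$p_{ij}(t_1) = \sum_{k \in I} p_{ik}(s)\, p_{kj}(t_1 - s) > 0$$
to select $k \in I$ with both $p_{ik}(s) > 0$ and $p_{kj}(t_1 - s) > 0$. I would then branch on $p_{kk}$. If $p_{kk} \equiv 0$, then Lemma \ref{lem:induction}(b) applied with the roles of $(j, i, t_1)$ played by $(k, i, s)$ forces $p_{kl} \equiv 0$ for every $l \in I$, contradicting $p_{kj}(t_1 - s) > 0$. Otherwise, the dichotomy gives $p_{kk}(u) > 0$ for every $u > 0$, and the chain of semigroup inequalities
$$p_{ij}(t_1 + u) \geq p_{ik}(s + u)\, p_{kj}(t_1 - s) \geq p_{ik}(s)\, p_{kk}(u)\, p_{kj}(t_1 - s) > 0$$
at $u = t_0 - t_1$ contradicts $p_{ij}(t_0) = 0$.

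The main obstacle is the diagonal dichotomy. Theorem \ref{thm:chung} only excludes a vanishing initial interval and carries no information about the behaviour of $p_{mm}$ at later times; upgrading this to a true dichotomy requires the observation that the positivity set of a diagonal entry is closed under addition and is therefore forced to exhaust $(0, \infty)$ once it accumulates at $0$. Once this is in place, the rest of the proof collapses to a single semigroup splitting followed by the correct application of Lemma \ref{lem:induction}(b) at the mediating state $k$, with no need for a separate case distinction based on the diagonals $p_{jj}$ or $p_{ii}$.
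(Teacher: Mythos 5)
Your proof is correct, but it is organized differently from the paper's. The paper proves the key intermediate statement that for \emph{every} $k\in I$ at least one of $p_{ik}$, $p_{kj}$ vanishes identically: assuming $p_{ik}\not\equiv 0$ and $p_{kj}\not\equiv 0$, it looks at the first zero $u$ of $p_{kj}$ (positive by Corollary \ref{cor:eventuallyzero}), deduces $p_{kk}(s)=0$ for all $s<u$ from $0=p_{kj}(u)\geq p_{kk}(s)p_{kj}(u-s)$, gets $p_{kk}\equiv 0$ from Theorem \ref{thm:chung}, and then kills $p_{kj}$ via Lemma \ref{lem:induction}(b); summing $p_{ij}(t)=\sum_k p_{ik}(t/2)p_{kj}(t/2)$ finishes. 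You instead first establish the diagonal zero--one law (which is precisely the $i=j$ case of the theorem) by combining openness and additive closedness of the positivity set $\{t:p_{mm}(t)>0\}$ with accumulation at $0$ forced by the contrapositive of Theorem \ref{thm:chung}, and then handle a single mediating state $k$ from one splitting of $p_{ij}(t_1)>0$: the case $p_{kk}\equiv 0$ is disposed of by Lemma \ref{lem:induction}(b) exactly as in the paper, while the case $p_{kk}>0$ everywhere is handled by the elementary forward propagation $p_{ij}(t_0)\geq p_{ik}(s)p_{kk}(t_0-t_1)p_{kj}(t_1-s)>0$, contradicting $p_{ij}(t_0)=0$. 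All applications of the auxiliary results (contrapositive of Theorem \ref{thm:chung}, Corollary \ref{cor:eventuallyzero} for $t\geq t_0$, Lemma \ref{lem:induction}(b) with $k$ in the role of $j$) are legitimate, and the floor-of-$(t-s)/s'$ translation argument for the dichotomy is sound. What your route buys is a cleanly isolated, reusable diagonal dichotomy and no need for the first-zero argument on $p_{kj}$; what the paper's route buys is the slightly stronger structural statement that the ``factorization'' $p_{ik}\equiv 0$ or $p_{kj}\equiv 0$ holds for every intermediate index $k$, from which the conclusion follows by a single summation rather than a contradiction at a chosen $t_1$.
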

\begin{proof}
	We prove that for every $k\in I$ at least one of the functions $p_{ik}$ and $p_{kj}$ is identically zero.
	To this end, let $k\in I$ and suppose that $p_{ik}(v)>0$ for some $v>0$. By Corollary \ref{cor:eventuallyzero},
	$p_{ik}(t)>0$ for all $0<t<v$. Hence, we may assume without loss of generality that $v<t_0$. It then follows from
	\[ 0 = p_{ij}(t_0) \geq p_{ik}(v)p_{kj}(t_0-v) \]
	that $p_{kj}(t_0-v)=0$. Aiming for a contradiction, we suppose that $p_{kj}$ is not identically zero. In this case, by Corollary \ref{cor:eventuallyzero},
	\begin{align} \label{eqn:supposep_kj} 
	u \coloneqq \inf \{t>0 : p_{kj}(t) =0 \}>0
	\end{align}
	and for each $0<s<u$ it follows from
	\[ 0 = p_{kj}(u) \geq p_{kk}(s) p_{kj}(u-s)\]
	and $p_{kj}(u-s)>0$ that $p_{kk}(s)=0$. Now Theorem \ref{thm:chung} implies that $p_{kk}(t)=0$ for all $t>0$.
	Therefore, $p_{kl}(t)=0$ for all $t >0$ and $l\in I$ by Part (b) of Lemma \ref{lem:induction}.
	In particular, $p_{kj}$ is identically zero in contradiction to our assumption.

	Hence, for every $t>0$ one has
	\[ p_{ij}(t) = \sum_{k\in I} p_{ik}(t/2) p_{kj}(t/2) = 0. \]
	This completes the proof.
\end{proof}

\section{Operator Semigroups on Sequence Spaces}
\label{sec:semigroups}

In what follows, we prove a version of Theorem \ref{thm:oncepositivealwayspositive} for one-parameter semigroups on sequence spaces.
A family of linear operators $(T_t)_{t\in (0,\infty)}$ on a vector space is called a \emph{(one-parameter operator) semigroup} if $T_{t+s}=T_tT_s$ for all $t,s>0$.

Semigroups appear naturally to describe solutions of linear autonomous evolution equations.
In many applications the underlying Banach space is a function space and thus exhibits some kind of order structure.
If, in such a situation, a positive initial value of the evolution equation leads to a positive solution, one speaks of a positive semigroup.
We refer to the recent monograph \cite{batkai2017} for an introduction to the theory of positive semigroups.

\medskip

In order to make the notion of ``sequence spaces'' precise, we start with the vector space of all real sequences $\ell^0$, 
which is a vector lattice (or Riesz space) with respect to the component-wise ordering. 
We refer to \cite{luxemburg1971} and \cite{meyer1991} for an introduction to the theory of Riesz spaces and Banach lattices, respectively.
For $k\in\N$, we write \[ e^k\coloneqq \big(\mathds{1}_{\{k\}}(n)\big)_{n\in\N} \]
for the \emph{$k$-th unit vector} in $\ell^0$. 
Now we call a sublattice of $\ell^0$, i.e.\ a linear subspace which is closed under lattice operations, a \emph{sequence space} if it contains all
unit vectors $e^k$, $k\in\N$.
All classical ``sequence spaces'', such as $\ell^p$ ($p$-summable sequences for $1\leq p < \infty$), $\ell^\infty$ (bounded sequences), $c$ (convergent sequences) or $c_0$ (null sequences), 
are sequence spaces in the sense of our definition.
However, we note that $\ell^0$ also has many sublattices which are no sequence spaces. For instance, $C(\R)$ is as a vector lattice isomorphic to 
\[ C(\Q) \coloneqq \{ g \in \ell^\infty(\Q) : \text{ there exists } f\in C(\R) \text{ such that } f|_\Q = g \}, \]
which in turn is a sublattice of $\ell^0$.

Let $E$ be a sequence space. We write $E_+\coloneqq \{ x \in E : x_n \geq 0 \text{ for all }n\in\N \}$ for its positive cone and recall that
a linear operator $T\colon E \to E$ is called \emph{positive} if $TE_+ \subseteq E_+$. A semigroup is said to be positive if it consists of positive operators.
A sequence $(x^k)\subseteq E$ is said to be \emph{order convergent} to $x\in E$ if it is order bounded and $x^k_n \to x_n$ for all $n\in\N$ as $k\to \infty$.
We note that for every $x\in E$, $\sum_{n=1}^N x_n e^n$ is order bounded by $\abs{x}$ and thus order convergent to $x$ as $N\to \infty$.  
A linear operator $T\colon E \to E$ is \emph{sequentially order continuous} if $(Tx^k)$ order converges to $Tx$ whenever $(x^k)$ order converges to $x$ in $E$.


Using this terminology, Theorem \ref{thm:oncepositivealwayspositive} becomes the following.

\begin{cor}
\label{cor:semigroups}
Let $E$ be a sequence space and $(T_t)_{t \in (0,\infty)}$ be a positive semigroup on $E$ of sequentially order continuous operators.
If $t\mapsto (T_t e^i)_j$ is continuous on $(0,\infty)$ for all $i,j \in \N$, then
for all $i,j \in\N$ either $(T_t e^i)_j>0$ for all $t>0$ or $(T_t e^i)_j=0$ for all $t>0$.
\end{cor}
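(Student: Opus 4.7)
The plan is to reduce the statement directly to Theorem \ref{thm:oncepositivealwayspositive} by verifying that
\[ p_{ij}(t) \coloneqq (T_t e^i)_j, \qquad i,j\in\N,\ t>0, \]
defines a positive matrix semigroup over the countable index set $I=\N$. Two of the three required properties are essentially free: positivity of each $p_{ij}$ follows from $T_t e^i \in E_+$, and continuity of $t\mapsto p_{ij}(t)$ on $(0,\infty)$ is a standing hypothesis. So the only real content is the Chapman--Kolmogorov identity
\[ p_{ij}(t+s) \;=\; \sum_{k\in\N} p_{ik}(t)\,p_{kj}(s) \qquad \text{for all } t,s>0. \]

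To obtain this I would exploit the observation already recorded in the section: for every $y\in E$, the partial sums $\sum_{k=1}^N y_k e^k$ are order bounded by $\abs{y}$ and order convergent to $y$ as $N\to\infty$. Applied to $y = T_t e^i \in E_+$, this gives
\[ T_t e^i \;=\; \olim_{N\to\infty}\,\sum_{k=1}^N p_{ik}(t)\, e^k, \]
where the partial sums are order bounded by $T_t e^i$ itself. Sequential order continuity of $T_s$, combined with the semigroup law $T_{t+s}=T_s T_t$ and linearity, yields
\[ T_{t+s} e^i \;=\; T_s\bigl(T_t e^i\bigr) \;=\; \olim_{N\to\infty}\,\sum_{k=1}^N p_{ik}(t)\, T_s e^k. \]
Since order convergence in $E$ is by definition componentwise on an order-bounded sequence, evaluation at the coordinate $j$ simply passes to the limit, and reading off this coordinate gives exactly $\sum_{k\in\N} p_{ik}(t)\,p_{kj}(s) = p_{ij}(t+s)$. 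With all three defining properties verified, $(p_{ij})_{i,j\in\N}$ is a positive matrix semigroup in the sense of the introduction, and Theorem \ref{thm:oncepositivealwayspositive} applies to every pair $(i,j)$, delivering the asserted dichotomy.

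I expect the only (mild) obstacle to be bookkeeping around the order-continuity step: one has to make sure that the partial sums really are order bounded in $E$ so that sequential order continuity of $T_s$ is legitimately applicable, and that the resulting order-convergent sum in $E$ can be read coordinatewise. Both of these points are immediate from the definition of order convergence given in the section and from $T_t e^i$ being an order bound for its own coordinate truncations; after this, no further analytic input is required and the reduction is essentially a one-line invocation of the matrix-semigroup result.
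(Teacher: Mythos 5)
Your proposal is correct and follows essentially the same route as the paper: both verify the Chapman--Kolmogorov identity by writing $T_t e^i$ as the order limit of its coordinate truncations (order bounded by $T_t e^i$ itself), applying sequential order continuity of $T_s$, and reading off the $j$-th coordinate, after which Theorem \ref{thm:oncepositivealwayspositive} gives the dichotomy. No gaps.
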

\begin{proof}
Let $t,s >0$ and $i,j \in \N$. Since $T_t$ and $T_s$ are order continuous,
\begin{align*} (T_{t+s} e^i)_j  = \biggl( T_s \sum_{k=1}^\infty (T_t e^i)_k e^k \biggr)_j = \sum_{k=1}^\infty (T_t e^i)_k (T_s e^k)_j.
\end{align*}
This shows that $p_{ij}(t) = (T_t e^i)_j$ defines a positive matrix semigroup and Corollary \ref{cor:semigroups} follows from Theorem \ref{thm:oncepositivealwayspositive}.
\end{proof}

\begin{rem}
\label{rem:inverse}
Although the setting of Corollary \ref{cor:semigroups} seems to be more special,
Theorem  \ref{thm:oncepositivealwayspositive} can  be recovered from Corollary \ref{cor:semigroups} as follows:
Let $p_{ij}$ be a positive matrix semigroup over the index set $\N$.  Then
\[E\coloneqq \{ x\in \ell^0 : \sum\nolimits_{n\in\N} \abs{x_n} p_{nm}(t) < \infty \text{ for all } t>0 \text{ and }m\in \N\}\]
is a sublattice of $\ell^0$ (even an ideal) that contains all unit vectors.
On this space, we define the mappings $T_t x \coloneqq \big(\sum_n x_n p_{nm}(t)\big)_{m \in \N}$.
Using the definition of $E$ and the properties of the positive matrix semigroup $p_{nm}$, it is easy to check that 
$T_t\colon E\to E$ defines a one-parameter semigroup that satisfies all conditions of Corollary \ref{cor:semigroups}. 
Hence, for every $n,m \in \N$, $p_{nm}(t)=(T_t e^n)_m$ is either strictly positive for all $t>0$ or identically zero.
\end{rem}

A norm on a vector lattice satisfying $\norm{x} \leq \norm{y}$ whenever $\abs{x} \leq \abs{y}$ is called a \emph{lattice norm};
a vector lattice endowed with a complete lattice norm is called a \emph{Banach lattice}.
A lattice norm on a sequence space is called \emph{order continuous} if every order convergent sequence converges in norm.
For instance, $c_0$ and $\ell^p$ for $1\leq p < \infty$ are sequence spaces with complete and order continuous lattice norms.
On the other hand, the sup norm on $\ell^\infty$ is a complete lattice norm which is not order continuous.

On a sequence space endowed with a complete and order continuous lattice norm, the continuity conditions on the semigroup in Corollary \ref{cor:semigroups} 
can be formally weakened: as in the case for Markovian matrix semigroups, the measurability of the components already implies their continuity.

\begin{cor}
\label{cor:measurablesemigroups}
	Let $E$ be a sequence space with a complete and order continuous lattice norm and let $(T_t)_{t \in (0,\infty)}$ be a positive semigroup on $E$.
If $t\mapsto (T_t e^i)_j$ is measurable on $(0,\infty)$ for all $i,j \in \N$, then the following assertions hold:
\begin{enumerate}[(a)]
\item For all $x\in E$ the mapping $t\mapsto T_t x$ is continuous on $(0,\infty)$.
\item For all $i,j \in\N$ either $(T_t e^i)_j>0$ for all $t>0$ or $(T_t e^i)_j=0$ for all $t>0$.
\end{enumerate}
\end{cor}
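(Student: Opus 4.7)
The starting point is that completeness of the lattice norm makes $E$ a Banach lattice: each positive linear operator $T_t$ is automatically bounded, each coordinate functional $\phi_j\colon x\mapsto x_j$ belongs to $E^*$, and by order continuity of the norm, $\lin\{e^i : i\in\N\}$ is norm-dense in $E$. Together with order-boundedness preservation by positivity, this shows (as in the proof of Corollary~\ref{cor:semigroups}) that each $T_t$ is sequentially order continuous, so the matrix entries $p_{ij}(t)\coloneqq(T_t e^i)_j$ satisfy the Chapman--Kolmogorov identity $p_{ij}(t+s)=\sum_k p_{ik}(t)p_{kj}(s)$.

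The crux is to upgrade measurability of each $p_{ij}$ to continuity; once this is done, assertion (b) is immediate from Corollary~\ref{cor:semigroups}. My plan is to first apply Kingman's classical theorem on measurable superadditive functions to $\log p_{jj}$, using $p_{jj}(t+s)\geq p_{jj}(t)p_{jj}(s)$, to obtain continuity of each diagonal entry on its positivity set. The inequalities $p_{ij}(t+s)\geq p_{ii}(t)p_{ij}(s)$ and $p_{ij}(t+s)\geq p_{ij}(t)p_{jj}(s)$, together with the Chapman--Kolmogorov identity itself, should then propagate continuity from the diagonal to the off-diagonal entries and from a set of positivity to all of $(0,\infty)$.

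For assertion (a), fix $x\in E_+$ and a sequence $t_n\to t_0>0$. Continuity of the matrix entries yields coordinate-wise convergence $(T_{t_n}x)_j=\sum_k x_k p_{kj}(t_n)\to (T_{t_0}x)_j$. Writing $T_{t_n}=T_{t_n-\delta}T_\delta$ for a fixed $\delta\in(0,t_0)$ and invoking local boundedness of $\|T_t\|$ on compact subsets of $(0,\infty)$, which follows from the semigroup law via a Baire category argument applied to the sets $\{t:\|T_t\|\leq n\}$, one obtains an order-dominating bound $T_{t_n}x\leq z\in E$ for all large $n$; order continuity of the norm then yields $T_{t_n}x\to T_{t_0}x$ in norm, and density of $\lin\{e^i\}$ together with the same local boundedness extends this to arbitrary $x\in E$.

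The principal obstacle is the middle step: transferring continuity off the diagonal, and securing local boundedness of $\|T_t\|$, both require sustained use of the Chapman--Kolmogorov identity because, unlike the classical Markovian case, no a priori bound such as $\sum_j p_{ij}(t)\leq 1$ is available.
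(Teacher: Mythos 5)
Your reduction to a matrix semigroup and the plan ``continuity of the entries $\Rightarrow$ (b) via Corollary \ref{cor:semigroups}'' is fine, but the crux of your argument --- upgrading measurability of $p_{ij}$ to continuity --- is not actually carried out, and the tool you invoke does not exist in the form you need. There is no theorem asserting that a measurable superadditive (equivalently, supermultiplicative after exponentiating) function on $(0,\infty)$ is continuous: $f(t)=\lfloor t\rfloor$ is measurable and superadditive but discontinuous, so $\log p_{jj}$ satisfying $p_{jj}(t+s)\geq p_{jj}(t)p_{jj}(s)$ plus measurability gives nothing of the sort. Kingman's theory of $p$-functions does yield continuity, but only under the full family of Markov-type inequalities (in particular $0\le p\le 1$), which is exactly the structure that is unavailable here --- as you yourself concede in your closing paragraph, where you label this step and the local boundedness of $\norm{T_t}$ as unresolved ``principal obstacles.'' So the proposal is missing its key step rather than containing a complete alternative proof.

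The argument sketched for assertion (a) also has gaps even granting continuity of the entries: the Baire category argument needs the sets $\{t:\norm{T_t}\le n\}$ to be closed (or at least measurable, which requires measurability of $t\mapsto\norm{T_t x}$ for all $x$, not yet established at that point, and then a Steinhaus-type measure argument rather than Baire); and local norm boundedness of $\norm{T_t}$ does not produce an order bound $T_{t_n}x\le z$ in a Banach lattice (norm-bounded sets need not be order bounded in $c_0$ or $\ell^p$), while coordinatewise convergence plus norm boundedness does not imply norm convergence (in $\ell^1$, $e^1+e^n\to e^1$ coordinatewise but not in norm). The paper avoids all of this by a different and shorter route: order continuity of each $T_t$ and of the norm gives measurability of $t\mapsto (T_tx)_j$ and then of $t\mapsto\applied{\phi}{T_tx}$ for positive $\phi\in E'$, hence weak measurability of the orbits; since the span of the unit vectors is norm dense, $E$ is separable, so Pettis' theorem upgrades this to strong measurability, and the Hille--Phillips theorem \cite[Thm 10.2.3]{hille1957} then yields strong continuity on $(0,\infty)$, i.e.\ assertion (a); assertion (b) follows by feeding (a) into Corollary \ref{cor:semigroups}. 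In other words, continuity is obtained at the level of the operator semigroup from classical measurability results, not by a functional-equation analysis of the individual entries; if you want to salvage your approach you would have to supply a genuinely new argument for that middle step.
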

\begin{proof}
	As every positive operator on a complete normed vector lattice is continuous, cf.\ \cite[Prop 1.3.5]{meyer1991}, and the norm is order continuous, every $T_t$ is order continuous.

	Let $x\in E_+$. It follows from the order continuity of all $T_t$ that
	\[ t \mapsto (T_t x)_j = \sup_N \sum_{k=1}^N x_k (T_t e^k)_j \]
	is measurable on $(0,\infty)$ for all $j\in \N$. 
	Now let $\phi \in E'$ be a positive functional. Due to the order continuity of the norm, 
	$\phi$ is order continuous and therefore
	\[ t\mapsto \applied{\phi}{T_tx} = \sup_N \sum_{k=1}^N (T_tx)_k \applied{\phi}{e^k} \]
	is measurable on $(0,\infty)$. Decomposing general $x\in E$ and $\phi \in E'$ in positive and negative parts shows that $t\mapsto T_t$ is weakly measurable.
	As $E$ is separable, $t\mapsto T_t$ is strongly measurable. Now assertion (a) follows from \cite[Thm 10.2.3]{hille1957}.

	Assertion (b) now is an immediate consequence of (a) and Corollary \ref{cor:semigroups}.
\end{proof}

\begin{rem}
	The proof of Part (a) of Corollary \ref{cor:measurablesemigroups} also shows that a positive matrix semigroup $p_{ij}$ which is merely known to be measurable on $(0,\infty)$
	is automatically continuous on $(0,\infty)$ if its associated one-parameter semigroup as constructed in Remark \ref{rem:inverse}
	preserves some sequence space with a complete and order continuous norm such as $c_0$ or $\ell^p$ for  $1\leq p < \infty$.
	This observation also includes the well-known fact that every Markovian matrix semigroup is continuous, cf.\ \cite[Thm II.1]{chung1967}.
\end{rem}

While the proof of Theorem \ref{thm:oncepositivealwayspositive} makes heavy use of the fact that the index set is at most countable, 
the statement holds true even on more general discrete spaces with possibly uncountably many atoms whenever the orbits of the considered semigroup necessarily vanishes on all but countably many atoms.
We show in the following corollary that this is the case for positive and strongly continuous one-parameter semigroups on one of the following spaces: $\ell^p(\R)$ for some $1\leq p < \infty$ or
\[ c_0(\R) \coloneqq \{ f\colon \R \to \R : \text{ for all }\eps>0\text{ the set }\{ r\in \R : \abs{f(r)}>\eps\} \text{ is finite}\}\]
endowed with the sup norm. On both spaces the proof is based on the fact that 
\[ \inf_{F\subseteq \R \text{ finite}} \norm{f \cdot \mathds{1}_{\R\setminus F} }=0 \] for all vectors $f$.

\begin{cor}
\label{cor:uncountable}
	Let either $E=\ell^p(\R)$ for some $1\leq p < \infty$ or $E=c_0(\R)$ 
	and let $(T_t)_{t \in (0,\infty)}$ be a positive semigroup on $E$ such that $t\mapsto T_t f$ is continuous on $(0,\infty)$ for all $f\in E$.
Then the following assertions hold:
\begin{enumerate}[(a)]
\item For each $f\in E$ the set $\{ r \in \R : \abs{T_t f}(r) > 0 \text{ for some } t>0 \}$
is at most countable.
\item For all $r,s \in \R$ either $(T_t e^r)(s)>0$ for all $t>0$ or $(T_t e^r)(s) = 0$ for all $t>0$. As before, $e^r=\mathds{1}_{\{r\}}$ denotes the indicator function of the singleton $\{r\}$.
\end{enumerate}
\end{cor}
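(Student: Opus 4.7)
The plan is to reduce both parts of Corollary \ref{cor:uncountable} to the countable-index case provided by Theorem \ref{thm:oncepositivealwayspositive}. On both $\ell^p(\R)$ and $c_0(\R)$ the point-evaluation functional $g\mapsto g(r)$ has operator norm at most $1$, and the hinted small-tail property $\inf_{F \text{ finite}} \norm{g\cdot \mathds{1}_{\R\setminus F}}=0$ ensures that the set $\{r : \abs{g(r)}>\eps\}$ is finite for every $g\in E$ and $\eps>0$. Consequently, each element of $E$ has at most countable support.

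For part (a), I would propagate this countable support from a dense sequence of times to all times using the continuity hypothesis. Fix a dense sequence $(t_n)\subseteq (0,\infty)$ and set $S\coloneqq \bigcup_n \operatorname{supp}(T_{t_n} f)$; the set $S$ is countable. For any $t>0$ and any $r\in\R\setminus S$, choosing $t_{n_k}\to t$ and applying the norm-bounded point-evaluation functional to $T_{t_{n_k}} f \to T_t f$ yields $(T_t f)(r) = \lim_k (T_{t_{n_k}} f)(r) = 0$.

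For part (b), I construct a countable subset $J\subseteq \R$ containing $r$ and $s$ that is ``invariant'' under the orbits of the semigroup, in the sense that $(T_t e^u)(v) = 0$ whenever $u\in J$, $v\notin J$, and $t>0$. Starting from $J_0\coloneqq \{r,s\}$, I iterate
\[ J_{n+1} \coloneqq J_n \cup \bigcup_{u\in J_n}\bigl\{ v\in\R : (T_t e^u)(v)>0 \text{ for some } t>0\bigr\},\]
each step adding a countable set by part (a); the union $J\coloneqq \bigcup_n J_n$ is countable and has the required invariance. Setting $q_{uv}(t)\coloneqq (T_t e^u)(v)$ for $u,v\in J$, continuity of each $q_{uv}$ on $(0,\infty)$ is immediate from the continuity of orbits and the norm bound on point-evaluation.

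For the Chapman--Kolmogorov law I use that every positive operator on a Banach lattice is automatically bounded, so I can pass $T_{t'}$ term by term through the norm-convergent expansion $T_t e^u = \sum_{v\in J} q_{uv}(t)\, e^v$ (norm convergence being again a consequence of the small-tail property, since the support is countable and lies in $J$ by invariance). Evaluating at $w\in J$ then gives
\begin{align*}
q_{uw}(t+t') = \sum_{v\in J} q_{uv}(t)\, q_{vw}(t'),
\end{align*}
so $(q_{uv})_{u,v\in J}$ is a positive matrix semigroup over a countable index set and Theorem \ref{thm:oncepositivealwayspositive} yields the dichotomy for $(T_t e^r)(s) = q_{rs}(t)$. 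The main technical step is verifying this Chapman--Kolmogorov identity, which reduces to norm convergence of the series and continuity of $T_{t'}$; the remaining verifications are routine bookkeeping.
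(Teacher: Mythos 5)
Your proposal is correct, and it reaches the conclusion by a route that differs from the paper in its details while sharing the same overall reduction to the countable-index case. For part (a), the paper runs a Dini-type compactness argument: for each $\delta>0$ and each compact $K\subseteq(0,\infty)$ the sets $K_F=\{t\in K: \norm{(T_tf)\mathds{1}_{\R\setminus F}}<\delta/2\}$, $F$ finite, form an increasing open cover of $K$, which yields the stronger statement that only finitely many coordinates of the orbit exceed $\delta$ uniformly on $K$; your argument (each element of $E$ has countable support, take a dense sequence of times, and use norm continuity plus boundedness of point evaluations) is simpler and suffices for the countability asserted in (a), though it does not give that local finiteness. For part (b), the paper takes the single countable set $R$ of coordinates ever charged by the orbit of $e^r$, observes that $\ell^p(R)$ (resp.\ $c_0(R)$) is invariant under the semigroup, and then quotes Corollary \ref{cor:measurablesemigroups}(b) for the restricted semigroup; you instead build the countable set $J\ni r,s$ by iterated closure under orbit supports and verify the Chapman--Kolmogorov identity for $q_{uv}(t)=(T_te^u)(v)$ by hand, using the norm-convergent expansion $T_te^u=\sum_{v\in J}q_{uv}(t)e^v$, the automatic boundedness of positive operators, and the boundedness of point evaluations, so that Theorem \ref{thm:oncepositivealwayspositive} applies directly. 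In effect you re-derive, in this concrete setting, the computation behind Corollary \ref{cor:semigroups} rather than invoking the sequence-space corollaries; this makes the argument a bit longer but more self-contained, and the explicit inclusion of $r$ and $s$ in $J_0$ neatly avoids the small subtlety in the paper's choice of $R$ (which need not contain $r$ itself, so strictly one should restrict to $\ell^p(R\cup\{r\})$). All the steps you flag as the technical core (norm convergence of the expansion, continuity of each $q_{uv}$, term-by-term application of $T_{t'}$) do go through exactly as you describe.
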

\begin{proof}
(a) Let $f\in E$ and let $K\subseteq (0,\infty)$ be a compact interval. 
Let $\delta\geq 0$. We are going to show that the set
\[ R_\delta \coloneqq \{ r \in \R : \abs{T_t f}(r) > \delta \text{ for some } t\in K\} \]
is finite.  Using the notation  $\cF \coloneqq \{ F \subseteq \R : F \text{ finite}\}$ we have
$\inf_{F\in \cF} \norm{(T_tf)\mathds{1}_{\R \setminus F}}=0$ for all $t\in K$. 
Moreover, since for all $F \in \cF$,  $\norm{(T_t f)\mathds{1}_{\R \setminus F}}$ is continuous in $t$, the sets
\[ K_F \coloneqq \{ t \in K : \norm{(T_t f)\mathds{1}_{\R \setminus F}} < \delta/2 \}\]
form an open cover of $K$. By compactness of $K$ and because $K_F$ increases with $F$, we find $F\in \cF$ such that $K \subseteq K_F$. 
In particular, 
$\abs{T_t f}(r) < \delta/2$ for all $r\in \R\setminus F$ and $t\in K$. This implies that $R_\delta \subseteq F$ showing that $R_\delta$ is finite.
Since $\delta>0$ and $K\subseteq (0,\infty)$ were arbitrary, this implies assertion (a).

(b) Let $r,s\in \R$. By Part (a), $R\coloneqq \{ v \in \R : (T_te^r)(v)  > 0 \text{ for some } t>0\}$ is at most countable. 
As each $T_t$ is order continuous, the subspace $\ell^p(R)=\{ f\in \ell^p(\R) : f(v)=0 \text{ for all }v\in \R\setminus R \}$
is invariant under each operator $T_t$. Hence, for $s\in R$ the assertion follows from Part (b) of Corollary \ref{cor:measurablesemigroups} applied to the restriction of
$(T_t)_{t \in (0,\infty)}$ to $\ell^p(R)$; for $r\in \R\setminus R$ one trivially has $(T_t e^r)(s)=0$ for all $t>0$.
\end{proof}

\appendix

\section{Analytic Semigroups} 
In what follows, a one-parameter operator semigroup $(T_t)_{t\in (0,\infty)}$ on a Banach space $X$
is called \emph{analytic} (or \emph{holomorphic}) if there exists $\theta \in (0,\frac{\pi}{2}]$ such the operator valued mapping $t \mapsto T_t$ has an analytic extension 
to $\Sigma_\theta \coloneqq \{ z \in \C\setminus \{0\} : \abs{\arg z } < \theta \}$.
It is worth mentioning that the mapping $\Sigma_\theta \ni z \mapsto T_z$ is analytic if and only if $\Sigma_\theta \ni z \mapsto \applied{T_z x}{\phi}$ is analytic for all $x\in X$ and $\phi \in X'$.
In the literature the definition of analyticity typically requires in addition a boundedness condition in the neighborhood of the origin, cf. \cite[Def 3.7.1]{arendt2001} or \cite[Ch 2]{lunardi1995}, 
which is not needed in this section. As the most important example, we mention that on a Hilbert space every semigroup generated by a self-adjoint operator that is bounded above is analytic, cf.\ \cite[Cor II.4.7]{nagel2000};
as the most important counterexample we mention that the shift semigroup on $L^p(\R)$ is not analytic for any $1 \leq  p < \infty$ (and Theorem \ref{thm:analytic} fails to hold).
For the sake of completeness, we recall that a  semigroup $(T_t)_{t\in (0,\infty)}$ is called a \emph{$C_0$-semigroup} 
if $\lim_{t \downarrow 0} T_tx=x$ for all $x\in X$.  An interesting example of an analytic semigroup that is not $C_0$ is for instance given by heat equation with non-local boundary conditions 
on $L^\infty(\Omega)$, cf.\ \cite[Thm 4.8]{arendt2016}.

\begin{thm}
	\label{thm:analytic}
	Let $E$ be a Banach lattice and let $(T_t)_{t \in (0,\infty)}$ be a positive and analytic semigroup on $E$.
	Let $f \in E_+$ and $\varphi \in E'_+$. 
	Then either $\applied{\phi}{ T_t f } = 0$ for all $t \in (0,\infty)$ or $\applied{\phi}{ T_t f}  > 0$ for all $t \in (0,\infty)$.
\end{thm}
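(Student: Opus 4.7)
The plan is to study the scalar function $g(t):=\applied{\varphi}{T_tf}$ on $(0,\infty)$. Positivity of $T_t$, $\varphi$ and $f$ gives $g\ge 0$, and because the operator-valued map $z\mapsto T_z$ is holomorphic on $\Sigma_\theta$, so is the scalar map $z\mapsto \applied{\varphi}{T_zf}$; hence $g$ extends to a holomorphic function $G$ on the connected sector $\Sigma_\theta$, and in particular $g$ is real analytic on $(0,\infty)$.

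Assuming, for contradiction, that $g\not\equiv 0$ while $g(t_0)=0$ for some $t_0>0$, I would first invoke the identity theorem for $G$ on $\Sigma_\theta$: zeros cannot accumulate in $\Sigma_\theta$, so the zero set $Z:=\{t>0 : g(t)=0\}$ is discrete in $(0,\infty)$. Combined with $g\ge 0$, real analyticity then forces $t_0$ to be an isolated zero of even order $2k\ge 2$ with $g^{(2k)}(t_0)>0$, and in particular $g^{(j)}(t_0)=0$ for $0\le j< 2k$.

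The core step is to contradict the existence of such an isolated zero using the semigroup structure. For every $0<s<t_0$ one has
\[ 0 = g(t_0) = \applied{T_s^{\ast}\varphi}{T_{t_0-s}f}, \]
a duality pairing of two non-negative elements; equivalently, the positive vector $T_{t_0-s}f$ lies in the hereditary null cone of the positive functional $T_s^{\ast}\varphi$. Adapting the strategy of Simon \cite{Simon1973}, Kishimoto--Robinson \cite{KishimotoRobinson1981}, and Majewski--Robinson \cite{MajewskiRobinson1983} to the Banach lattice setting, I would propagate this disjointness along the orbits by combining the hereditary structure of the null cones with the analytic continuation of $t\mapsto T_tf$ into $\Sigma_\theta$, producing zeros of $g$ that accumulate at $t_0$ and contradicting the discreteness of $Z$.

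I expect this propagation step to be the main obstacle. Without self-adjointness or an $L^p$-pairing one cannot directly appeal to a Cauchy--Schwarz inequality such as $g(s+t)^2\le g(2s)g(2t)$, so the transfer of the disjointness from a single pair $(T_s^{\ast}\varphi,T_{t_0-s}f)$ to a full accumulation of zeros must proceed purely via the interplay of the lattice order with the holomorphy of $G$ on $\Sigma_\theta$. The indispensability of analyticity at exactly this point is illustrated by the shift semigroup on $L^p(\R)$, which is positive and strongly continuous but not analytic, and for which the theorem fails.
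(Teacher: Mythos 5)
Your setup is fine (the scalar function $g(t)=\applied{\varphi}{T_tf}$ extends holomorphically to $\Sigma_\theta$, so if $g\not\equiv 0$ its zeros are isolated in $(0,\infty)$), but the proof stops exactly where the real work has to start. The ``propagation step'' that is supposed to turn the single relation $\applied{T_s^{\ast}\varphi}{T_{t_0-s}f}=0$ into a sequence of zeros of $g$ accumulating at $t_0$ is only announced, not carried out --- and you say yourself that you expect it to be the main obstacle. As you also note, the Cauchy--Schwarz-type inequalities that drive the arguments of Simon and Kishimoto--Robinson are unavailable in a general Banach lattice, so ``adapting their strategy'' is not a routine matter; without a concrete mechanism the contradiction is never reached, and the proposal is therefore incomplete at its core.

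The paper closes this gap by a different device: one never produces accumulating zeros of $g$ itself (indeed that is impossible once $g\not\equiv 0$), but of auxiliary functions attached to order minorants of the orbit. Assume $\applied{\phi}{T_sf}=0$; replace $f$ by $\tilde f\coloneqq T_{s/2}f$, whose orbit is norm continuous at $0$ by analyticity, so that $T_t\tilde f\to\tilde f$ as $t\downarrow 0$ and $\applied{\phi}{T_{s/2}\tilde f}=0$. Lemma~\ref{lem:orbit-below} then chooses $t_n\downarrow 0$ with $\sum_n\norm{\tilde f-T_{t_n}\tilde f}<\infty$ and sets $\tilde f_n\coloneqq\tilde f-\sum_{k\ge n}(\tilde f-T_{t_k}\tilde f)^+$, $f_n\coloneqq\tilde f_n^+$, producing an increasing sequence $0\le f_n\uparrow\tilde f$ with $f_n\le T_{t_n}\tilde f$. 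For fixed $m$ and $n\ge m$ this gives $0\le T_{s/2-t_n}f_m\le T_{s/2-t_n}f_n\le T_{s/2}\tilde f=T_sf$, hence $\applied{\varphi}{T_{s/2-t_n}f_m}=0$: these are zeros of the analytic function $t\mapsto\applied{\varphi}{T_tf_m}$ clustering at $s/2$, so the identity theorem makes this function vanish identically; letting $m\to\infty$ yields $g(t)=0$ for all $t>s/2$, and one further application of the identity theorem gives $g\equiv 0$. The missing idea in your sketch is precisely this domination trick --- lattice minorants $f_m$ of $T_{t_n}\tilde f$ whose images at the adjusted times stay below $T_sf$ --- which substitutes for the Cauchy--Schwarz argument and supplies the cluster of zeros that ``lattice order plus holomorphy'' alone does not.
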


A result in the same vein was proven in \cite[Prop~3.12]{AroraGlueckPreprint} for eventually positive rather than positive semigroups; 
while the semigroups considered there are $C_0$, the fact that eventual positivity suffices indicates that the behavior at time $0$ is not important.
The proof of Thm~\ref{thm:analytic} is an adaptation of the classical proof for $C_0$-semigroups and was communicated to the author by Jochen Gl\"uck.

The argument mainly follows the somewhat simplified presentation of the $C_0$-semigroup proof from \cite[Thm~C-III 3.2(b)]{nagel1986}
-- yet, the argument becomes still a bit more transparent if one splits off the following observation, 
that does not need analyticity of the semigroups, into a separate lemma.

\begin{lemma}
	\label{lem:orbit-below}
	Let $(T_t)_{t \in (0,\infty)}$ be a positive semigroup on a Banach lattice $E$. 
	Let $f \in E_+$ and assume that $T_t f \to f$ for $t \downarrow 0$.
	Then there exists a sequence $t_n \downarrow 0$ 
	and an increasing sequence $(f_n)$ in $E$ that converges to $f$ 
	and that satisfies $0 \le f_n \le T_{t_n} f$ for all $n$.
\end{lemma}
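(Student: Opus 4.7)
The plan is to construct the sequence $(f_n)$ by truncating $f$ from below by the tails of a summable series built from the ``defects'' $(f-T_{t_n}f)_+$. Since $T_tf \to f$ in norm as $t \downarrow 0$, I first extract $t_n \downarrow 0$ with $\norm{T_{t_n}f - f} \le 2^{-n}$ and set
\[ g_n := (f - T_{t_n}f)_+ \ge 0. \]
Because $\norm{g_n} \le \norm{T_{t_n}f - f} \le 2^{-n}$, the series $\sum_{n} g_n$ converges absolutely in the Banach lattice $E$, so the tails
\[ s_n := \sum_{k \ge n} g_k \]
are well-defined positive elements, form a decreasing sequence (since $s_n - s_{n+1} = g_n \ge 0$), and satisfy $\norm{s_n}\to 0$.

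Next I define $f_n := (f - s_n)_+$. Monotonicity is immediate: as $s_n$ decreases, $f - s_n$ increases, and taking positive parts preserves order. Positivity is built in. For the upper bound $f_n \le T_{t_n}f$, I would argue that
\[ T_{t_n}f = f - (f - T_{t_n}f) \ge f - g_n \ge f - s_n, \]
using $s_n \ge g_n$; combining this with $T_{t_n}f \ge 0$ gives $T_{t_n}f \ge (f - s_n)_+ = f_n$.

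Finally, for norm convergence $f_n \to f$, I would use the standard Banach-lattice inequality $\abs{x_+ - y_+} \le \abs{x-y}$ to deduce
\[ \norm{f - f_n} = \norm{f_+ - (f-s_n)_+} \le \norm{s_n} \to 0. \]

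The only real subtlety is producing a single object that is simultaneously positive, dominated by $T_{t_n}f$, and monotone in $n$; the summability of the defects (obtained by thinning the time sequence) together with the positive-part truncation handles all three requirements at once. The analyticity of the semigroup, notably, plays no role here, which is why this observation is isolated as a separate lemma.
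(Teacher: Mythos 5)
Your construction is exactly the paper's: after thinning the time sequence so that the defects $(f-T_{t_n}f)^+$ are summable, your $f_n=(f-s_n)_+$ coincides with the paper's $\tilde f_n^{\,+}$ where $\tilde f_n = f-\sum_{k\ge n}(f-T_{t_k}f)^+$, and your verifications of monotonicity, the bound $f_n\le T_{t_n}f$, and norm convergence are correct. This is the same argument, just spelled out in slightly more detail.
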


\begin{proof}
	Let $(t_n)$ converge to $0$ sufficiently fast that $\sum_n \norm{f - T_{t_n}f} < \infty$  
	and define
	\begin{align*}
		\tilde f_n \coloneqq  f - \sum_{k=n}^\infty \big(f - T_{t_k}f\big)^+
	\end{align*}
	for each $n$.
	Then $(\tilde f_n)$ is an increasing sequence in $E$ that converges to $f$ 
	and for each $n$ one has $\tilde f_n \le f - (f - T_{t_n}f)^+ \leq T_{t_n}f$. 
	By defining $f_n \coloneqq \tilde f_n^+$ for each $n$ one obtains a sequence $(f_n)$ in $E_+$ with the desired properties.
\end{proof}

\begin{proof}[Proof of Theorem~\ref{thm:analytic}]
	Let $\applied{\phi}{T_s f} = 0$ for some time $s > 0$.
	
	\emph{Step~1:}
	Assume, in addition, that $T_t f \to f$ as $t \downarrow 0$. 
	Then we can choose sequences $(t_n)$ in $(0,s)$ and $(f_n)$ in $E_+$ 
	as in Lemma~\ref{lem:orbit-below}.
	For each fixed index $m$ and all $n \ge m$ we get
	\begin{align*}
		0 \leq T_{s-t_n} f_m  \leq  T_{s-t_n} f_n \leq T_{s}f
	\end{align*}
	so $\applied{ \varphi}{ T_{s-t_n}f_m } = 0$ for all $n \ge m$. 
	Since the sequence $(s-t_n)$ clusters at $s$, the identity theorem for analytic functions 
	implies that $\applied{\varphi}{ T_t f_m } = 0$ for all $t \in (0,\infty)$. 
	By letting $m \to \infty$ we conclude that $\applied{ \phi}{T _t f } = 0$ for all $t \in (0,\infty)$.
	
	\emph{Step~2:} 
	Now let $f$ be general. 
	Due to the analyticity of the semigroup the orbit of $f$ is continuous on $(0,\infty)$, 
	so Step~1 can be applied to the vector $\tilde f := T_{s/2}f$ since $\applied{\phi}{ T_{s/2}\tilde f } = 0$. 
	So $\applied{ \phi}{T_t f} = 0$ for all $t > s/2$ 
	and another application of the identity theorem shows that the same equality remains true for all $t > 0$.
\end{proof}

Step~1 in the proof above is simply the argument for the $C_0$-semigroup case; 
the main point is to observe that this argument does not really need the entire semigroup to be $C_0$ but just the orbit of $f$.

\bibliographystyle{abbrv}
\bibliography{analysis}

\begin{thebibliography}{10}

\bibitem{arendt2001}
W.~Arendt, C.~J.~K. Batty, M.~Hieber, and F.~Neubrander.
\newblock {\em Vector-valued {L}aplace transforms and {C}auchy problems},
  volume~96 of {\em Monographs in Mathematics}.
\newblock Birkh\"auser Verlag, Basel, 2001.

\bibitem{nagel1986}
W.~Arendt, A.~Grabosch, G.~Greiner, U.~Groh, H.~P. Lotz, U.~Moustakas,
  R.~Nagel, F.~Neubrander, and U.~Schlotterbeck.
\newblock {\em One-parameter semigroups of positive operators}, volume 1184 of
  {\em Lecture Notes in Mathematics}.
\newblock Springer-Verlag, Berlin, 1986.

\bibitem{arendt2016}
W.~Arendt, S.~Kunkel, and M.~Kunze.
\newblock Diffusion with nonlocal boundary conditions.
\newblock {\em J. Funct. Anal.}, 270(7):2483--2507, 2016.

\bibitem{AroraGlueckPreprint}
S.~Arora and J.~Glück.
\newblock Irreducibility of eventually positive semigroups.
\newblock 2023.
\newblock arXiv: 2307.04627v1.

\bibitem{batkai2017}
A.~B{\'a}tkai, M.~Kramar~Fijav{\v{z}}, and A.~Rhandi.
\newblock {\em Positive operator semigroups. {From} finite to infinite
  dimensions}, volume 257 of {\em Oper. Theory: Adv. Appl.}
\newblock Basel: Birkh{\"a}user/Springer, 2017.

\bibitem{chung1960}
K.~L. Chung.
\newblock {\em Markov chains with stationary transition probabilities}, volume
  104 of {\em Grundlehren Math. Wiss.}
\newblock Springer, Cham, 1960.

\bibitem{chung1963}
K.~L. Chung.
\newblock {On the boundary theory for Markov chains}.
\newblock {\em Acta Mathematica}, 110(none):19 -- 77, 1963.

\bibitem{chung1967}
K.~L. Chung.
\newblock {\em Markov chains with stationary transition probabilities. 2nd ed},
  volume 104 of {\em Grundlehren Math. Wiss.}
\newblock Springer, Cham, 1967.

\bibitem{nagel2000}
K.~Engel and R.~Nagel.
\newblock {\em One-parameter semigroups for linear evolution equations}, volume
  194 of {\em Graduate Texts in Mathematics}.
\newblock Springer-Verlag, New York, 2000.

\bibitem{haeseler2012}
S.~Haeseler, M.~Keller, D.~Lenz, and R.~Wojciechowski.
\newblock Laplacians on infinite graphs: {Dirichlet} and {Neumann} boundary
  conditions.
\newblock {\em J. Spectr. Theory}, 2(4):397--432, 2012.

\bibitem{hille1957}
E.~Hille and R.~S. Phillips.
\newblock {\em Functional analysis and semigroups. {Rev}. ed}, volume~31 of
  {\em Colloq. Publ., Am. Math. Soc.}
\newblock American Mathematical Society (AMS), Providence, RI, 1957.

\bibitem{keller2021}
M.~Keller, D.~Lenz, and R.~Wojciechowski.
\newblock {\em Graphs and Discrete Dirichlet Spaces}.
\newblock Springer, Cham, 2021.

\bibitem{KishimotoRobinson1981}
A.~Kishimoto and D.~W. Robinson.
\newblock Subordinate semigroups and order properties.
\newblock {\em J. Aust. Math. Soc., Ser. A}, 31:59--76, 1981.

\bibitem{lunardi1995}
A.~Lunardi.
\newblock {\em Analytic semigroups and optimal regularity in parabolic
  problems}, volume~16 of {\em Prog. Nonlinear Differ. Equ. Appl.}
\newblock Basel: Birkh{\"a}user, 1995.

\bibitem{luxemburg1971}
W.~A.~J. Luxemburg and A.~C. Zaanen.
\newblock {\em Riesz spaces. {V}ol. {I}}.
\newblock North-Holland Publishing Co., Amsterdam-London; American Elsevier
  Publishing Co., New York, 1971.
\newblock North-Holland Mathematical Library.

\bibitem{levy1951}
P.~Lévy.
\newblock Systèmes markoviens et stationnaires. cas dénombrable.
\newblock {\em Annales scientifiques de l'École Normale Supérieure},
  68:327--381, 1951.

\bibitem{levy1958}
P.~Lévy.
\newblock Processus markoviens et stationnaires. cas dénombrable.
\newblock {\em Annales de l'institut Henri Poincaré}, 16(1):7--25, 1958.

\bibitem{MajewskiRobinson1983}
A.~Majewski and D.~W. Robinson.
\newblock Strictly positive and strongly positive semigroups.
\newblock {\em J. Aust. Math. Soc., Ser. A}, 34:36--48, 1983.

\bibitem{meyer1991}
P.~Meyer-Nieberg.
\newblock {\em Banach lattices}.
\newblock Universitext. Springer-Verlag, Berlin, 1991.

\bibitem{Simon1973}
B.~Simon.
\newblock Ergodic semigroups of positivity preserving self-adjoint operators.
\newblock {\em J. Funct. Anal.}, 12:335--339, 1973.

\end{thebibliography}

\end{document}